\PassOptionsToPackage{square,comma,numbers,sort&compress}{natbib}  
\documentclass[preprint,12pt]{elsarticle}

\usepackage{natbib}
\setcitestyle{sort&compress}

\usepackage{lineno,hyperref}
\modulolinenumbers[5]

\usepackage{amsmath}
\usepackage{amssymb}
\usepackage{amsthm}

\allowdisplaybreaks

\usepackage[a4paper]{geometry}
\usepackage{graphicx}
\usepackage{caption}
\usepackage{subcaption}
\newcommand{\dd}[1]{\mathrm{d}#1}

\newcommand{\dbxn}[1]{\left(\frac{\dd{}}{\dd{x}}-bx\right)^{(#1)}}
\newcommand{\dbxni}[1]{\left({\dd{}}/{\dd{x}}-bx\right)^{(#1)}}
\newcommand{\daxn}[1]{\left({\dd{}}/{\dd{x}}-2ax\right)^{(#1)}}
\newcommand{\ddx}[0]{\frac{\dd{}}{\dd{x}}}
\newcommand{\ddxn}[1]{\frac{\dd{}^{#1}}{\dd{x^{#1}}}}
\newcommand{\si}[1]{{\sigma}^{(#1)}}

\newcommand{\pHy}[1]{\frac{\partial H_y(x,\theta)}{\partial #1}}
\newcommand{\pEz}[1]{\frac{\partial E_z(x,\theta)}{\partial #1}}

\newcommand{\pt}[3]{\frac{ {\partial}^{#3} {#1}}{\partial {#2}^{#3}}}
\newcommand{\pp}[2]{\frac{{\partial} {#1}}{{\partial} {#2}}}
\newcommand{\pth}[1]{\frac{\theta ^{#1}}{\left( {#1} \right)!}}
\newcommand{\ppj}[3]{\frac{\partial ^{#1} J_z(x,0)}{\partial \theta^{#2} \partial x^{#3}}}

\newtheorem{thm}{Theorem}
\newtheorem{lemma}[thm]{Lemma}

\begin{document}

\begin{frontmatter}

\title{Analytical Solutions of 1D Maxwell's Equations via Infinite-Order Expansions}

\author[Independent Researcher]{David Wei Ge\texorpdfstring{\corref{cor1}}{}}
\ead{gexiaobao.usa@gmail.com}
\cortext[cor1]{Corresponding author}
\address[Independent Researcher]{Retired, formerly with Microsoft, Redmond, USA}

\begin{abstract}
Analytical solutions to Maxwell’s equations are essential for understanding the causal and instantaneous behavior of electromagnetic fields, yet they are challenging to obtain in open-space settings with general initial values and source terms. This work uses an infinite-order estimation scheme that yields closed-form analytical solutions to Maxwell’s equations. The scheme is expressed as general function-to-function transformations that directly map initial values and source terms to the fields. Several case studies demonstrate the method, producing exact solutions for different initial and source configurations. The results provide both theoretical insight and practical benchmarks for computational electromagnetics.
\end{abstract}

\begin{keyword}
Maxwell's equations \sep Analytical solutions \sep Partial differential equations (PDEs) \sep Finite-difference time-domain (FDTD) \sep Wave propagation 
\end{keyword}

\end{frontmatter}

\section{Introduction}
Maxwell’s equations underpin all classical electromagnetics and describe how electromagnetic fields evolve in space and time. While numerical methods, especially the finite-difference time-domain (FDTD) scheme, are widely used to approximate these equations \cite{fdtd:Yee, fdtd:meep, fdtd:Schneider, maxn:Neumann, maxn:Zhou, maxn:Fedeli, fdtd1d:Mastryukov, fdtd:SchneiderDispersion, maxn:Kohlmann}, they inevitably introduce truncation and dispersion errors, grid anisotropy, and occasionally non-physical effects such as faster-than-light propagation \cite{fdtd:SchneiderDispersion, fdtd:Manry}. Achieving closed-form time-domain solutions remains a longstanding challenge \cite{maxs:Oleg, Lipan:24, LeBoudec2024}.

A recent advance by Hongli Yang et al. (\cite{YANG2023126678} 2023) achieved analytical time-domain solutions based on the four-potential wave equations, combining scalar potentials, curls, Laplacians, and integrations of sources. The present work pursues the same goal—exact analytical solutions of Maxwell’s equations for arbitrary analytic initial values and source terms—through a different approach. Starting from the first-order vector form of Maxwell’s equations, we derive infinite-order FDTD-type formulas that remove truncation error entirely, yielding compact analytical expressions. Potential-based approaches introduce gauge freedom; this issue does not arise here.

Central to this formulation is the distinction between mapping—the correspondence between input and output functions—and transformation, the explicit formula that realizes it. Classical Taylor series define self-mappings of analytic functions. The theorems presented here extend that framework by defining mappings from analytic inputs to distinct outputs, enabling exact function-to-function transformations of Maxwell’s equations in the time domain.

The resulting formulas offer two main advantages over conventional numerical schemes: (1) each point in space-time can be computed independently, requiring no computational domain or artificial boundaries, and (2) the true physical properties of the equations are preserved without numerical artifacts. These analytical solutions complement, rather than replace, numerical methods such as FDTD or finite-element time-domain (FETD) schemes, providing rigorous reference fields for verification.

For simplicity, this paper only presents one-dimensional solutions, obtained by collapsing the curls to derivatives in the three-dimensional solution formulas. After presenting and proving the solution theorems, four case studies demonstrate the proposed transformations for various initial and source conditions. The proofs of several auxiliary formulas are given in the Appendix. The approach presented here represents a novel contribution to the analytical study of Maxwell’s equations and a foundation for future research.

\section{The Problem in 1D}
\subsection{Definition of the problem} Consider the following one-dimensional Maxwell’s equations in open space (\cite{maxs:Feynman}: Eqs. (20.16) and (20.18); \cite{fdtd:Schneider}: Eqs. (3.9) and (3.10)):
\begin{equation}
	\label{1d.1}
	\pHy{\theta} = \frac{1}{\eta} \pEz{x}
\end{equation}
\begin{equation}
	\label{1d.2}
	\pEz{\theta} = \eta \pHy{x} - \eta J_z(x,\theta)
\end{equation}
\begin{equation}
	\label{1d.3}
	H_y(x,0) = f_{iH}(x)
\end{equation}
\begin{equation}
	\label{1d.4}
	E_z(x,0) = f_{iE}(x)
\end{equation}
where $\eta = \sqrt{{\mu}/{\epsilon}}$, $\theta = ct$, $c = {1}/{\sqrt{\epsilon \mu}}$, and
\begin{equation*}
	\begin{gathered}
		x,t \in \mathbb{R}; \epsilon, \mu \in \mathbb{R}_{>0};
		f_{iH}, f_{iE} \in C^{\infty}(\mathbb{R},\mathbb{R}); J_z, H_y, E_z \in C^{\infty}(\mathbb{R}^2, \mathbb{R})
	\end{gathered}
\end{equation*}
where $\epsilon$ and $\mu$ are constants, $x$ and $t$ are space and time, respectively, function $J_z (x,\theta)$ is the source, functions $f_{iH} (x)$ and $f_{iE} (x)$  are the initial values. The equations are with dimensionless units \cite{maxn:Kohlmann, fdtd:meep}. For convenience, $\theta$ is referred to as time.

To solve open-space problems using FDTD, artificial boundary conditions are typically introduced to truncate the computational domain, and absorbing layers are employed to suppress reflections and maintain stability. As a result, the solution does not represent true open-space conditions. However, for a short time after a point source is activated—before the fields reach the boundaries—FDTD does produce an accurate approximation of the open-space solution. Building on this observation, the Moving Window FDTD technique estimates the open-space solution in one direction by dynamically shifting the computational domain \cite{maxn:Zhou, fdtd:James}.

\subsection{Definition of the solution}
A function pair \{ $ H_y(x, \theta), E_z(x, \theta) $ \} is a solution to the problem if and only if it satisfies (\ref{1d.1}), (\ref{1d.2}), (\ref{1d.3}) and (\ref{1d.4}). 

\subsection{Solution as a function mapping}
Taking functions $J_z (x,\theta)$, $f_{iH} (x)$ and $f_{iE} (x)$ as the input, function pair \{ $ H_y(x, \theta), E_z(x, \theta) $ \} as the output, the definitions of the problem and its solution define a function-to-function mapping:
\begin{equation}
	\label{1d.fmap}
	\{ J_z(x,\theta), f_{iH}(x), f_{iE}(x) \} \to \{ H_y(x,\theta), E_z(x,\theta) \}
\end{equation}

For practical analysis, we divide the general problem into two fundamental sub-cases.

\textbf{Initial-value problem} :
\begin{equation}
	\begin{gathered}
		\label{1d.fmapIni}
		\{J_z(x,\theta) \equiv 0, f_{iH}(x), f_{iE}(x) \} \to \{ H_y(x,\theta), E_z(x,\theta) \}
	\end{gathered}
\end{equation}

\textbf{Source-driven problem}:
\begin{equation}
	\begin{gathered}
		\label{1d.fmapSrc}
		\{ J_z(x,\theta), f_{iH}(x) \equiv 0, f_{iE}(x) \equiv 0\} \to \{ H_y(x,\theta), E_z(x,\theta) \}
	\end{gathered}
\end{equation}

\section{The 1D solutions}
The theorems in this section provide solution formulas but do not address uniqueness. While the FDTD algorithm yields unique solutions for specified initial values and sources, a full analysis of uniqueness lies beyond the scope of this work.

\begin{thm} \textbf{Initial value solution}. 
	
	The function-to-function transformation defined by formulas (\ref{1d.solIniH}) and (\ref{1d.solIniE}) provides a mapping that satisfies the initial-value problem described by (\ref{1d.fmapIni}) in the range
	\begin{equation*}
		0 \le \theta \le \theta_{max}, 0 \le \lvert x \rvert \le x_{max}
	\end{equation*}
	 provided that the corresponding series converge within this range.			
	\begin{equation}
		\label{1d.solIniH}
		H_y(x,\theta) = f_{iH} + \sum_{n=0}^{\infty} \frac{\theta ^{2(n+1)}}{\left(2(n+1)\right)!} \pt{f_{iH}}{x}{2(n+1)} + \frac{1}{\eta}\sum_{n=0}^{\infty} \frac{\theta ^{2n+1}}{(2n+1)!} \pt{f_{iE}}{x}{2n+1}
	\end{equation}
	\begin{equation}
		\label{1d.solIniE}
		E_z(x,\theta) = f_{iE} + \sum_{n=0}^{\infty} \frac{\theta ^{2(n+1)}}{\left(2(n+1)\right)!} \pt{f_{iE}}{x}{2(n+1)} + \eta\sum_{n=0}^{\infty} \frac{\theta ^{2n+1}}{(2n+1)!} \pt{f_{iH}}{x}{2n+1}
	\end{equation}
	
\end{thm}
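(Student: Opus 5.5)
Within the stated range the series are assumed to converge. I would also assume, as in the paper's setting of analytic inputs, that they converge uniformly on compact subsets together with their term-by-term derivatives in $\theta$ and $x$. Under that assumption the proof is a direct verification: substitute (\ref{1d.solIniH}) and (\ref{1d.solIniE}) into (\ref{1d.1})--(\ref{1d.4}) with $J_z\equiv 0$ and differentiate term by term. Write the formulas uniformly as
\begin{equation*}
H_y=\sum_{k=0}^{\infty}\frac{\theta^{2k}}{(2k)!}\pt{f_{iH}}{x}{2k}+\frac{1}{\eta}\sum_{k=0}^{\infty}\frac{\theta^{2k+1}}{(2k+1)!}\pt{f_{iE}}{x}{2k+1},
\end{equation*}
absorbing the leading $f_{iH}$ as the $k=0$ term of the first sum after reindexing $k=n+1$. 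Treat $E_z$ the same way.

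\textbf{Initial conditions.} Setting $\theta=0$ kills every term with a positive power of $\theta$. This leaves $H_y(x,0)=f_{iH}(x)$ and $E_z(x,0)=f_{iE}(x)$, which are (\ref{1d.3}) and (\ref{1d.4}).

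\textbf{Equation (\ref{1d.1}).} Differentiating $H_y$ in $\theta$ lowers each even power $\theta^{2k}/(2k)!$ to $\theta^{2k-1}/(2k-1)!$ for $k\ge1$, and the $k=0$ term vanishes. Each odd power $\theta^{2k+1}/(2k+1)!$ becomes $\theta^{2k}/(2k)!$. This gives
\begin{equation*}
\pHy{\theta}=\sum_{k=1}^{\infty}\frac{\theta^{2k-1}}{(2k-1)!}\pt{f_{iH}}{x}{2k}+\frac{1}{\eta}\sum_{k=0}^{\infty}\frac{\theta^{2k}}{(2k)!}\pt{f_{iE}}{x}{2k+1}.
\end{equation*}
Differentiating $E_z$ in $x$ raises each derivative order by one:
\begin{equation*}
\frac{1}{\eta}\pEz{x}=\frac{1}{\eta}\sum_{k=0}^{\infty}\frac{\theta^{2k}}{(2k)!}\pt{f_{iE}}{x}{2k+1}+\sum_{k=0}^{\infty}\frac{\theta^{2k+1}}{(2k+1)!}\pt{f_{iH}}{x}{2k+2}.
\end{equation*}
Shifting $k\to k-1$ in the last sum shows the two expressions coincide term by term. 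Equation (\ref{1d.2}) with $J_z\equiv0$ follows by the same computation with the roles of $H$ and $E$ swapped and $1/\eta$ replaced by $\eta$.

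\textbf{Justifying term-by-term differentiation.} This is the main obstacle, because the theorem assumes only that the series converge. I would read the hypothesis in the analytic sense intended by the paper. If the four series converge locally uniformly in the region, the $\theta$-series are power series in $\theta$ and can be differentiated term by term inside their radius of convergence. For the $x$-derivative I would require that the differentiated series (the same series shifted in index) also converge uniformly. Uniform convergence of the derivative series then allows the interchange by the standard theorem on differentiating series of functions.

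A cleaner route, which I would try first, is to note that the formulas are the even and odd parts of $f(x+\theta)$ and $f(x-\theta)$ Taylor-expanded about $x$. Indeed $H_y=\tfrac12[f_{iH}(x+\theta)+f_{iH}(x-\theta)]+\tfrac1{2\eta}[f_{iE}(x+\theta)-f_{iE}(x-\theta)]$ whenever the Taylor series of $f_{iH}$ and $f_{iE}$ converge on $[x-\theta,x+\theta]$. From this d'Alembert form, (\ref{1d.1}) and (\ref{1d.2}) follow immediately by the chain rule. This identification is valid only when the inputs are analytic with a sufficiently large radius of convergence, so I would present the direct series computation as the proof and the d'Alembert form as a check on it.
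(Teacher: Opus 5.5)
Your proposal is correct and follows the paper's proof: check the initial conditions at $\theta=0$, then differentiate $H_y$ in $\theta$ and $E_z$ in $x$ (and conversely) term by term, and match the resulting series after an index shift. The paper differentiates term by term without any justification, so your remarks on uniform convergence of the differentiated series are a tightening, not a different route. Likewise, your d'Alembert cross-check appears in the paper only as a consequence stated after the theorem.
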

\begin{proof}
	It is easy to verify that (\ref{1d.solIniH}) and (\ref{1d.solIniE}) satisfy (\ref{1d.3}) and (\ref{1d.4}). What remains to be shown is that they also satisfy (\ref{1d.1}) and (\ref{1d.2}) under the condition $J_z = 0$.
	
	From (\ref{1d.solIniE}) we have
	\begin{equation*}
		\pp{E_z(x,\theta)}{x} = \sum_{n=0}^{\infty} \pth{{2n}} \pt{f_{iE}}{x}{2n+1}
		 + {\eta}\sum_{n=0}^{\infty} \pth{{2n+1}} \pt{f_{iH}}{x}{2n+2}
	\end{equation*}

	From (\ref{1d.solIniH}) we have
\begin{equation*}
	\pp{H_y(x,\theta)}{\theta} = \sum_{n=0}^{\infty} \pth{{2n+1}} \pt{f_{iH}}{x}{2(n+1)} + \frac{1}{\eta}\sum_{n=0}^{\infty} \pth{{2n}} \pt{f_{iE}}{x}{2n+1}
\end{equation*}

The preceding two equations show that (\ref{1d.solIniH}) and (\ref{1d.solIniE}) satisfy (\ref{1d.1}).

	From (\ref{1d.solIniH}) we have
\begin{equation*}
	\pp{H_y(x,\theta)}{x} = \sum_{n=0}^{\infty} \pth{2n} \pt{f_{iH}}{x}{2n+1} + \frac{1}{\eta} \sum_{n=0}^{\infty} \pth{2n+1} \pt{f_{iE}}{x}{2n+2}
\end{equation*}	
	
	From (\ref{1d.solIniE}) we can obtain
	\begin{equation*}
		\pp{E_z(x,\theta)}{\theta} = \sum_{n=0}^{\infty} \pth{2n+1} \pt{f_{iE}}{x}{2(n+1)} + \eta \sum_{n=0}^{\infty} \pth{2n} \pt{f_{iH}}{x}{2n+1}
	\end{equation*}
		
	The preceding two equations show that (\ref{1d.solIniH}) and (\ref{1d.solIniE}) satisfy (\ref{1d.2}) if $J_z = 0$.
	
	$\therefore$ (\ref{1d.solIniH}) and (\ref{1d.solIniE}) satisfy (\ref{1d.1}), (\ref{1d.2}), (\ref{1d.3}), and (\ref{1d.4}), under the condition $J_z = 0$.
\end{proof}

The following result follows directly from the above theorem.

	\begin{equation}
		\label{1d.solIniH.close}
		H_y(x,\theta) =\frac{1}{2}\left( f_{iH}(x-\theta) -\frac{1}{\eta}f_{iE}(x-\theta) + f_{iH}(x+\theta)+\frac{1}{\eta}f_{iE}(x+\theta)  \right)
	\end{equation}
	\begin{equation}
		\label{1d.solIniE.close}
		E_z(x,\theta) =\frac{1}{2}\left(\eta f_{iH}(x+\theta)+f_{iE}(x+\theta) -\eta f_{iH}(x-\theta)+f_{iE}(x-\theta)  \right)
	\end{equation}

The above formulas are equivalent with d’Alembert formula, see section 2.4.1 in \cite{pdfEvans}. 
Note that the simple formulas (\ref{1d.solIniH.close}) and (\ref{1d.solIniE.close}) apply only to the 1D formulation; higher-dimensional cases are considerably more involved. 

\begin{thm} \textbf{Source driven solution}. 
	
	The function-to-function transformation defined by formulas (\ref{1d.solSrcH}) and (\ref{1d.solSrcE}) provides a mapping that satisfies the source-driven problem described by (\ref{1d.fmapSrc}) in the range
	\begin{equation*}
		0 \le \theta \le \theta_{max}, 0 \le \lvert x \rvert \le x_{max}
	\end{equation*}
	 provided that the corresponding series converge within this range.
	 			
	\begin{equation}
		\begin{gathered}
			\label{1d.solSrcH}
			H_y(x,\theta) =
			\\ - \sum_{n=0}^{\infty} \pth{2n+2} \sum_{m=0}^{n} \ppj{2n+1}{2(n-m)}{2m+1} 
			\\ - \sum_{n=0}^{\infty} \pth{2n+3} \sum_{m=0}^{n} \ppj{2n+2}{2(n-m)+1}{2m+1}
		\end{gathered}
	\end{equation}
	\begin{equation}
		\begin{gathered}
			\label{1d.solSrcE}
			E_z(x,\theta) =
			\\ - \eta \sum_{n=0}^{\infty} \pth{2n+2} \sum_{m=0}^{n} \ppj{2n+1}{2(n-m)+1}{2m}
			\\ - \eta \sum_{n=0}^{\infty} \pth{2n+1} \sum_{m=0}^{n} \ppj{2n}{2(n-m)}{2m}
		\end{gathered}
	\end{equation}	
\end{thm}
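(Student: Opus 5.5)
The plan is the same as for the initial-value theorem: check the initial conditions, then differentiate (\ref{1d.solSrcH}) and (\ref{1d.solSrcE}) term by term and match coefficients of equal powers of $\theta$ in (\ref{1d.1}) and (\ref{1d.2}). The one new ingredient is the source term $-\eta J_z(x,\theta)$ in (\ref{1d.2}). I will handle it by expanding $J_z$ in its Taylor series in $\theta$ about $\theta=0$,
\begin{equation*}
J_z(x,\theta)=\sum_{k=0}^{\infty}\frac{\theta^{k}}{k!}\,\frac{\partial^{k}J_z(x,0)}{\partial\theta^{k}} .
\end{equation*}
I read the hypothesis ``provided that the corresponding series converge'' as covering three things on the stated range: this expansion, the locally uniform convergence of the series, and the series of their term-by-term derivatives. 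To lighten notation, write $J^{(p,q)}$ for $\partial^{p+q}J_z(x,0)/\partial\theta^{p}\partial x^{q}$.

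\textbf{Initial conditions.} Every term in (\ref{1d.solSrcH}) and (\ref{1d.solSrcE}) carries a factor $\theta^{k}$ with $k\ge 1$. Hence $H_y(x,0)=E_z(x,0)=0$, which agrees with $f_{iH}\equiv f_{iE}\equiv 0$.

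\textbf{Equation (\ref{1d.1}).} Differentiating (\ref{1d.solSrcH}) in $\theta$ lowers each exponent by one. This gives
\begin{equation*}
-\sum_{n}\frac{\theta^{2n+1}}{(2n+1)!}\sum_{m=0}^{n}J^{(2n-2m,2m+1)}-\sum_{n}\frac{\theta^{2n+2}}{(2n+2)!}\sum_{m=0}^{n}J^{(2n-2m+1,2m+1)} .
\end{equation*}
Differentiating (\ref{1d.solSrcE}) in $x$ and dividing by $\eta$ raises each $x$-order by one. This produces exactly the same two double sums, with the roles of the two outer series swapped. So (\ref{1d.1}) holds identically, with no reindexing needed.

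\textbf{Equation (\ref{1d.2}): the main bookkeeping step.} First, $\partial E_z/\partial\theta$ consists of:
\begin{itemize}
\item even powers $\theta^{2N}/(2N)!$, $N\ge0$, with coefficient $-\eta\sum_{m=0}^{N}J^{(2N-2m,2m)}$;
\item odd powers $\theta^{2N+1}/(2N+1)!$, $N\ge0$, with coefficient $-\eta\sum_{m=0}^{N}J^{(2N-2m+1,2m)}$.
\end{itemize}
Second, $\eta\,\partial H_y/\partial x$ consists of $\theta^{2n+2}/(2n+2)!$ with coefficient $-\eta\sum_{m=0}^{n}J^{(2n-2m,2m+2)}$, and $\theta^{2n+3}/(2n+3)!$ with coefficient $-\eta\sum_{m=0}^{n}J^{(2n-2m+1,2m+2)}$. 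Shifting $N=n+1$ and $m'=m+1$ turns these into exactly the $m'=1,\dots,N$ parts of the $\partial E_z/\partial\theta$ coefficients for $N\ge1$.

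The missing pieces are the $m'=0$ terms $J^{(2N,0)}$ and $J^{(2N+1,0)}$ for all $N\ge0$. These include the $N=0$ terms $J_z(x,0)$ and $J^{(1,0)}$, which the shifted sums never reach. They are supplied precisely by $-\eta J_z(x,\theta)$ through the Taylor expansion above: even $k=2N$ gives the first family and odd $k=2N+1$ the second. Adding everything, the coefficients of every power of $\theta$ agree, so (\ref{1d.2}) holds.

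\textbf{Expected obstacle.} The combinatorics is routine once the index shift is written down. The delicate point is analytic: justifying term-by-term differentiation of the double series, rearranging them by powers of $\theta$, and identifying $J_z(x,\theta)$ with its $\theta$-Taylor series. I would handle this entirely through the convergence hypothesis in the statement, taken as absolute and locally uniform convergence together with analyticity of $J_z$ in $\theta$. I would not attempt to derive these properties from the smoothness assumptions alone, since $C^\infty$ does not guarantee them.
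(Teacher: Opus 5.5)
Your proposal is correct and follows essentially the same route as the paper. Term-by-term differentiation gives (\ref{1d.1}) directly. For (\ref{1d.2}), the paper likewise reindexes $\partial H_y/\partial x$ so that $n$ starts at $1$, restores the missing $m=0$ terms $\partial^{k}J_z(x,0)/\partial\theta^{k}$, and recognizes them as the $\theta$-Taylor series of $J_z$. Your explicit remark that this needs analyticity of $J_z$ in $\theta$ and locally uniform convergence of the differentiated series makes precise an assumption the paper uses only tacitly.
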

\begin{proof}
	It is easy to verify that (\ref{1d.solSrcH}) and (\ref{1d.solSrcE}) satisfy (\ref{1d.3}) and (\ref{1d.4}) under the conditions $f_{iH} = 0$ and $f_{iE} = 0$. What remains to be shown is that they also satisfy (\ref{1d.1}) and (\ref{1d.2}).
	
	From (\ref{1d.solSrcH}) we have
\begin{equation*}
	\begin{gathered}	
		\pp{H_y(x,\theta)}{\theta} =
		\\-\sum_{n=0}^{\infty} \pth{{2n+1}} \sum_{m=0}^{n} \ppj{2n+1}{2(n-m)}{2m+1} 
		\\- \sum_{n=0}^{\infty} \pth{{2n+2}}\sum_{m=0}^{n}\ppj{2n+2}{2(n-m)+1}{2m+1}
	\end{gathered}
\end{equation*}

From (\ref{1d.solSrcE}) we have
\begin{equation*}
	\begin{gathered}
		\frac{1}{\eta} \pp{E_z(x,\theta)}{x} =
		\\-\sum_{n=0}^{\infty} \pth{{2n+1}} \sum_{m=0}^{n} \ppj{2n+1}{2(n-m)}{2m+1}
		\\- \sum_{n=0}^{\infty} \pth{{2n+2}}\sum_{m=0}^{n}\ppj{2n+2}{2(n-m)+1}{2m+1} 
	\end{gathered}
\end{equation*}

The preceding two equations show that (\ref{1d.solSrcH}) and (\ref{1d.solSrcE}) satisfy (\ref{1d.1}).

	From (\ref{1d.solSrcH}) we have
\begin{equation*}
	\begin{gathered}	
		\pp{H_y(x,\theta)}{x} =
		\\ - \sum_{n=0}^{\infty} \pth{2n+2} \sum_{m=0}^{n} \ppj{2n+2}{2(n-m)}{2m+2} 
		\\ - \sum_{n=0}^{\infty} \pth{2n+3} \sum_{m=0}^{n} \ppj{2n+3}{2(n-m)+1}{2m+2}
	\end{gathered}
\end{equation*}
Rearranging the summations to make index $n$ start from 1, we obtain
\begin{equation*}
	\begin{gathered}	
		\pp{H_y(x,\theta)}{x} =
		\\ - \sum_{n=1}^{\infty} \pth{2n} \sum_{m=1}^{n} \ppj{2n}{2(n-m)}{2m} 
		\\ - \sum_{n=1}^{\infty} \pth{2n+1} \sum_{m=1}^{n} \ppj{2n+1}{2(n-m)+1}{2m}
	\end{gathered}
\end{equation*}
Make the two series start from $n=0$ by adding two new series, and we obtain
\begin{equation*}
	\begin{gathered}	
		\pp{H_y(x,\theta)}{x} =
		\\ -\sum_{n=0}^{\infty} \pth{{2n}} \sum_{m=0}^{n} \ppj{2n}{2(n-m)}{2m} 
		+ \sum_{n=0}^{\infty} \pth{{2n}}\pt{J_z(x,0)}{\theta}{2n}
		\\ -\sum_{n=0}^{\infty} \pth{{2n+1}} \sum_{m=0}^{n} \ppj{2n+1}{2(n-m)+1}{2m}
		+ \sum_{n=0}^{\infty} \pth{{2n+1}}\pt{J_z(x,0)}{\theta}{2n+1}
	\end{gathered}
\end{equation*}
The two newly added series form a Taylor series of $J_z(x,\theta)$, and we arrive at
\begin{equation*}
	\begin{aligned}	
		\pp{H_y(x,\theta)}{x} &= J_z(x,\theta)
		\\ &-\sum_{n=0}^{\infty} \pth{2n} \sum_{m=0}^{n} \ppj{2n}{2(n-m)}{2m} 
		\\&-\sum_{n=0}^{\infty} \pth{2n+1} \sum_{m=0}^{n}\ppj{2n+1}{2(n-m)+1}{2m}
	\end{aligned}
\end{equation*}

The preceding equation shows that (\ref{1d.solSrcH}) and (\ref{1d.solSrcE}) satisfy (\ref{1d.2}).

	$\therefore$ (\ref{1d.solSrcH}) and (\ref{1d.solSrcE}) satisfy (\ref{1d.1}), (\ref{1d.2}), (\ref{1d.3}), and (\ref{1d.4}), under the conditions $f_{iH} = 0$ and $f_{iE} = 0$.
\end{proof}

\section{Case studies}

The new theorems, in principle, apply to arbitrary analytic initial values and source terms. In many physical applications, however, sources are defined at discrete locations \cite{maxn:Kohlmann, maxn:Fedeli, fdtd:Schneider}, which renders them non-differentiable. Conventional numerical schemes such as FDTD circumvent this issue by substituting discrete quantities with piecewise differentiable representations. Analytical approaches, on the other hand, require smooth inputs, and the Gaussian function serves as a practical and illustrative choice. Figure \ref{fig:gauss} illustrates this transition from discrete to analytic representation using Gaussian approximations to point sources.

Fundamentally, this limitation reflects a property intrinsic to Maxwell’s equations when treated analytically: physical systems often involve discontinuities that defy strict differentiability. Numerical schemes mask these issues, along with the influence of parameters such as grid spacing, whereas analytical formulations expose them directly. The solution formulas presented here thus open a pathway to studying such effects explicitly, though this lies beyond the present scope.

\begin{figure}[ht]
	\begin{subfigure}{.33\textwidth}
		\includegraphics[width=.82\linewidth]{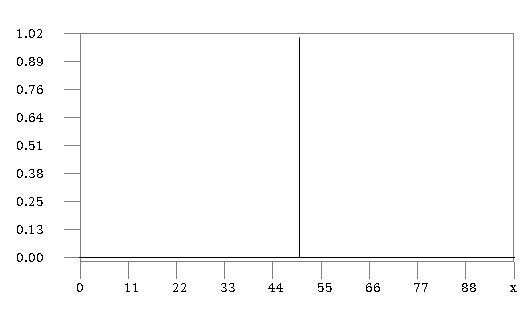}
		\caption{A discrete value}
		\label{fig:pointvalue}
	\end{subfigure}%
	\begin{subfigure}{.33\textwidth}
		\includegraphics[width=.82\linewidth]{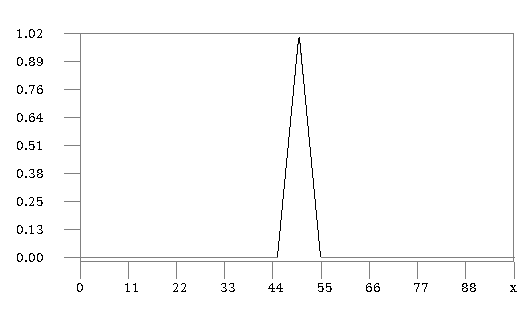}
		\caption{A discrete value in FDTD}
		\label{fig:pointfdtd}
	\end{subfigure}%
	\begin{subfigure}{.33\textwidth}
		\includegraphics[width=.82\linewidth]{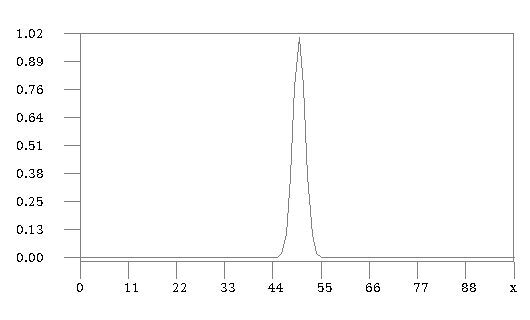}
		\caption{Gaussian function}
		\label{fig:pointgauss}
	\end{subfigure}
	
	\caption{A discrete value at a space point.}
	\label{fig:gauss}
\end{figure}

Each of Fig.{\ref{fig:exa.1}}–{\ref{fig:exa.3} displays ten curves, where each curve represents a snapshot of the analytical solution at a specific time $\theta=ct$. By plotting multiple snapshots together, the figures illustrate the propagation of the solution over time. 
	
The analytical solutions derived in this work are expressed in terms of infinite series. In practical computations, these series must be truncated to finite sums. In all numerical evaluations presented here, the truncation is chosen so that further increasing the number of retained terms does not lead to a detectable change in the computed results, indicating convergence.

In the present implementation, high-precision arithmetic with 4000 decimal digits is employed, and up to 40 000 terms are retained in the series evaluations. This choice ensures robust convergence of the numerical results and serves as a reference implementation rather than an optimized computational strategy.

The development of efficient algorithms for evaluating these series, including optimal truncation criteria and the minimum required numerical precision for a given accuracy, is an important topic for future research. Such advances would enable the construction of standardized and computationally efficient software libraries for practical applications.

\subsection{Case-study 1: initial-value solution}
The following initial values are used.
\begin{equation}
	\label{1d.exa.ini.H}
	f_{iH}(x) = 0
\end{equation}
\begin{equation}
	\label{1d.exa.ini.E}
	f_{iE}(x) = \exp(-ax^2); a \in \mathbb{R}_{>0}
\end{equation}
where $a$ is a constant.

By applying (\ref{f4}) and (\ref{f5}) to (\ref{1d.exa.ini.E}) and substituting the results into (\ref{1d.solIniH}) and (\ref{1d.solIniE}), we obtain the following closed-form analytical solution of Maxwell’s equations for the specified initial values:

\begin{equation}
	\label{1d.exa.sol.ini.H}
	H_y(x,\theta) = -\frac{1}{\eta}\exp({-a(x^2+\theta ^2)}) \sinh(2ax\theta)
\end{equation}
\begin{equation}
	\label{1d.exa.sol.ini.E}
	E_z(x,\theta) = \exp({-a(x^2+\theta ^2)}) \cosh(2ax\theta)
\end{equation}

Formulas (\ref{1d.solIniH.close}) and (\ref{1d.solIniE.close}) also produce the same solution, as expected.

The numerical results obtained from (\ref{1d.exa.sol.ini.H}) and (\ref{1d.exa.sol.ini.E}) with $a=1$ are shown in Fig.\ref{fig:exa.1}.

\begin{figure}[!htb]
	\begin{subfigure}{.5\textwidth}
		\includegraphics[width=.9\linewidth]{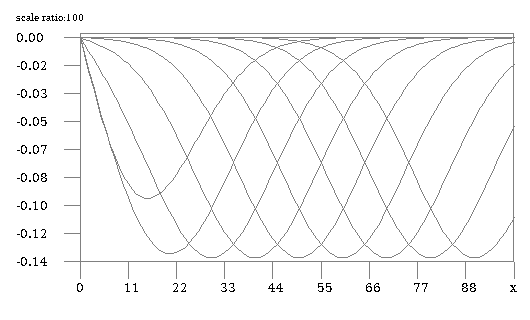}
		\caption{10 snapshots of $H_y$}
		\label{fig:exa.1.H}
	\end{subfigure}%
	\begin{subfigure}{.5\textwidth}
		\includegraphics[width=.9\linewidth]{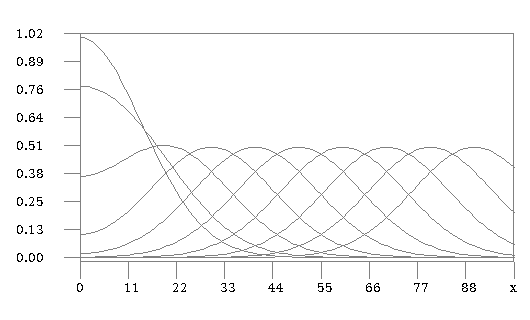}
		\caption{10 snapshots of $E_z$}
		\label{fig:exa.1.E}
	\end{subfigure}
	\caption{Initial value solution (Case-study 1).}
	\label{fig:exa.1}
\end{figure}

\subsection{Case-study 2: source-driven solution}
The following source is used.
\begin{equation}
	\label{1d.exa.src}
	J_z(x,\theta) = \sqrt{a} \exp({-ax^2}) ; a \in \mathbb{R}_{>0}
\end{equation}
where $a$ is a constant.

By applying (\ref{f4}) and (\ref{f5}) to (\ref{1d.exa.src}) and substituting the results into (\ref{1d.solSrcH}) and (\ref{1d.solSrcE}), we obtain the following closed-form analytical solution of Maxwell's equations for the specified source:
\begin{equation}
	\label{1d.exa.sol.src.H}
	H_y(x,\theta) = \exp({-ax^2}) \mathrm{esinh}_1(\sqrt{a}\theta, 2\sqrt{a}x)
\end{equation}
\begin{equation}
	\label{1d.exa.sol.src.E}
	E_z(x,\theta) = -\eta \exp({-ax^2}) \mathrm{eicosh}(\sqrt{a}\theta,2\sqrt{a}x)
\end{equation}
where
\begin{equation}
	\label{esinh1}
	\mathrm{esinh}_1(\xi,\sigma) :=  \sum_{n=0}^{\infty}\frac{(-1)^n\xi ^{2n+2}}{(2n+2)!}\sum_{k=0}^{n}(-1)^kp_{n,k}\sigma ^{2k+1}
\end{equation}
\begin{equation}
	\label{eicosh}
	\mathrm{eicosh}(\xi,\sigma) := \sum_{n=0}^{\infty}\frac{(-1)^n\xi ^{2n+1}}{(2n+1)!} \sum_{k=0}^{n}(-1)^kq_{n,k}\sigma ^{2k}
\end{equation}
where $p_{n,k}$ is given by (\ref{pnk}) and $q_{n,k}$ is given by (\ref{qnk}). Two new special functions, denoted by $\mathrm{esinh_1}$ and $\mathrm{eicosh}$, appear in the solution formulas. The analysis in this work depends only on their explicit series definitions; the specific choice of notation is adopted for convenience.

From the definitions of the special functions $\mathrm{esinh}_1$ and $\mathrm{eicosh}$, it follows directly that the defining series converge for $\lvert \varrho \rvert < 1$ and $ \lvert \xi \rvert <1$. A more detailed analysis of the radius of convergence and the maximal domain of analyticity of these functions is beyond the scope of the present work and will be investigated in future research.

The numerical results obtained from (\ref{1d.exa.sol.src.H}) and (\ref{1d.exa.sol.src.E}) with $a=1$ are shown in Fig.\ref{fig:exa.2}.

\begin{figure}[ht]
	\begin{subfigure}{.5\textwidth}
		\includegraphics[width=.9\linewidth]{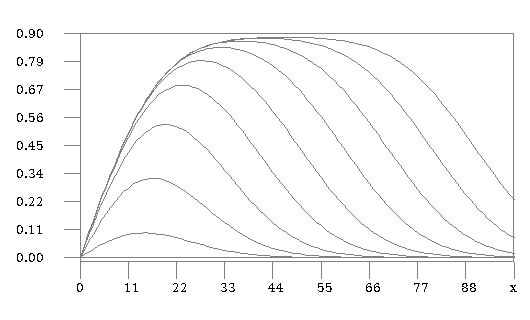}
		\caption{10 snapshots of $H_y$}
		\label{fig:exa.2.H}
	\end{subfigure}%
	\begin{subfigure}{.5\textwidth}
		\includegraphics[width=.9\linewidth]{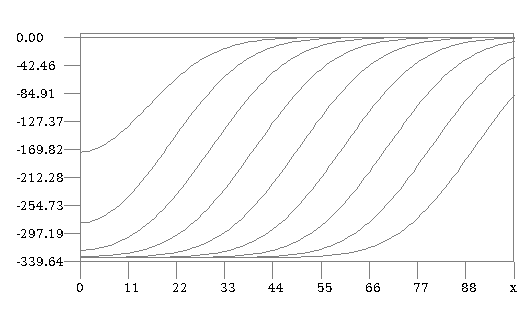}
		\caption{10 snapshots of $E_z$}
		\label{fig:exa.2.E}
	\end{subfigure}
	\caption{Source-driven solution (Case-study 2).}
	\label{fig:exa.2}
\end{figure}

\subsection{Case-study 3: turning off source}
Suppose in case-study 2 at a time $\theta = \theta _0 > 0$, the source is turned off: $J_z(x,\theta) = 0, \forall \theta > \theta _0$. We get an "initial value problem" which uses the following initial-values.
\begin{equation}
	\label{1d.exa.srcini.H}
	f_{iH}(x) = \exp({-ax^2}) S_H(x)
\end{equation} 
\begin{equation}
	\label{1d.exa.srcini.E}
	f_{iE}(x) = -\eta \exp({-ax^2}) S_E(x)
\end{equation}
where
\begin{equation}
	\label{1d.exa.srcini.SH}
	S_{H}(x) = \mathrm{esinh}_1(\sqrt{a}\theta_0, 2\sqrt{a}x)
\end{equation} 
\begin{equation}
	\label{1d.exa.srcini.SE}
	S_{E}(x) = \mathrm{eicosh}(\sqrt{a}\theta_0,2\sqrt{a}x)
\end{equation}

By applying (\ref{f6}) and (\ref{f7}) to (\ref{1d.exa.srcini.H}) and (\ref{1d.exa.srcini.E}), and substituting the results into (\ref{1d.solIniH}) and (\ref{1d.solIniE}), we obtain the following closed-form analytical solution of Maxwell's equations:
\begin{equation}
	\label{1d.exa.sol.srcini.H}
	H_y(x,\theta) = \exp({-a(x^2+\theta^2)}) \sum_{k=0}^{\infty}\left( \theta_{2k}S_{H,2k}-\theta_{2k+1}S_{E,2k+1} \right)
\end{equation}
\begin{equation}
	\label{1d.exa.sol.srcini.E}
	E_z(x,\theta) = \eta \exp({-a(x^2+\theta^2)}) \sum_{k=0}^{\infty}\left( \theta_{2k+1}S_{H,2k+1}-\theta_{2k}S_{E,2k} \right) 
\end{equation}
where
\begin{equation}
	\theta_k = \frac{\theta^k}{k!}
\end{equation}
\begin{equation}
	\label{deriv.SH}
	S_{H,k}(x) = \left( \ddx - 2ax  \right) ^{(k)} S_H(x)
\end{equation}
\begin{equation}
	\label{deriv.SE}
	S_{E,k}(x) = \left( \ddx - 2ax  \right) ^{(k)} S_E(x)
\end{equation}
where $\left( \mathrm{d}/{\mathrm{d}x} - 2ax  \right) ^{(k)}$ is defined by (\ref{ds}). Note that comparing with Case-study 2, the solution here is shifted in time by $\theta_0$.

The numerical results obtained from (\ref{1d.exa.sol.srcini.H}) and (\ref{1d.exa.sol.srcini.E}) with $a=1$ and ${\theta}_0 = 10$ are shown in Fig.\ref{fig:exa.3}.

\begin{figure}[ht]
	\begin{subfigure}{.5\textwidth}
		\includegraphics[width=.9\linewidth]{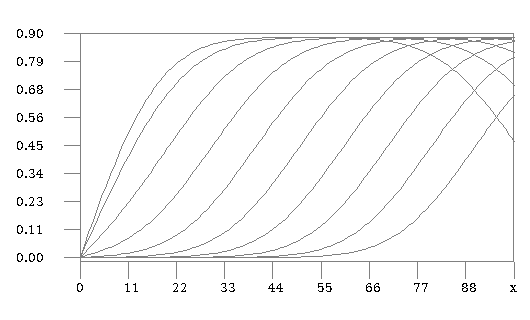}
		\caption{10 snapshots of $H_y$}
		\label{fig:exa.3.H.1}
	\end{subfigure}%
	\begin{subfigure}{.5\textwidth}
		\includegraphics[width=.9\linewidth]{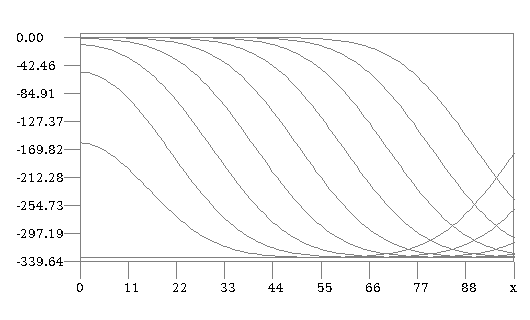}
		\caption{10 snapshots of $E_z$}
		\label{fig:exa.3.E.1}
	\end{subfigure}
	
	\caption{Solution after turning off the source (Case-study 3).}
	\label{fig:exa.3}
\end{figure}

\subsection{Case-study 4: driven by cosine}
The following harmonic source \cite{fdtd:Schneider} is used.
\begin{equation}
	\label{1d.exa.src.cos}
	J_z(x,\theta) = \sqrt{a} \exp({-ax^2})cos(\omega \theta) ; a,\omega \in \mathbb{R}_{>0}
\end{equation}
where $a$ and $\omega$ are two constants.

By applying (\ref{f6}) and (\ref{f7}) to (\ref{1d.exa.src.cos}) and substituting the results into (\ref{1d.solSrcH}) and (\ref{1d.solSrcE}), we obtain the following closed-form analytical solution of Maxwell's equations for the specified source:
\begin{equation}
	\label{1d.exa.sol.src.cos.H}
	H_y(x,\theta) = \exp({-ax^2}) \mathrm{esinh}_1(\sqrt{a}\theta, 2\sqrt{a}x, {\omega}_a)
\end{equation}
\begin{equation}
	\label{1d.exa.sol.src.cos.E}
	E_z(x,\theta) = -\eta \exp({-ax^2}) \mathrm{eicosh}(\sqrt{a}\theta,2\sqrt{a}x, {\omega}_a)
\end{equation}
where
\begin{equation}
	\label{esinh1,f}
	\mathrm{esinh}_1(\xi,\sigma, \omega) :=  \sum_{n=0}^{\infty}{(-1)^n\xi _{2n+2}}\sum_{m=0}^{n}{\omega}^{2(n-m)}\sum_{k=0}^{m}(-1)^kp_{m,k}\sigma ^{2k+1}
\end{equation}
\begin{equation}
	\label{eicosh.f}
	\mathrm{eicosh}(\xi,\sigma, \omega) := \sum_{n=0}^{\infty}{(-1)^n\xi_{2n+1}}\sum_{m=0}^{n}{\omega}^{2(n-m)} \sum_{k=0}^{m}(-1)^kq_{m,k}\sigma ^{2k}
\end{equation}
\begin{equation}
	\label{omegaa}
	{\omega}_a = {\omega}/{\sqrt{a}}; \xi_k=\xi^k/(k!)
\end{equation}

The numerical results obtained from (\ref{1d.exa.sol.src.cos.H}) and (\ref{1d.exa.sol.src.cos.E}) with $a=1$ and $\omega = 1$ are shown in Fig.\ref{fig:exa.4}, showing the propagation of the magnetic wave and the electric wave.

\begin{figure}[ht]
	\begin{subfigure}{.33\textwidth}
		\includegraphics[width=.82\linewidth]{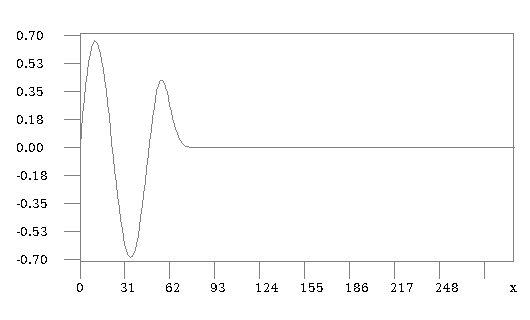}
		\caption{$H_y$ at $\theta=7.5$}
		\label{fig:exa.4.H.1}
	\end{subfigure}%
	\begin{subfigure}{.33\textwidth}
		\includegraphics[width=.82\linewidth]{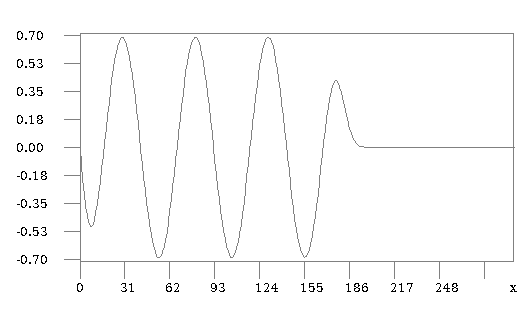}
		\caption{$H_y$ at $\theta=22.5$}
		\label{fig:exa.4.H.2}
	\end{subfigure}%
	\begin{subfigure}{.33\textwidth}
		\includegraphics[width=.82\linewidth]{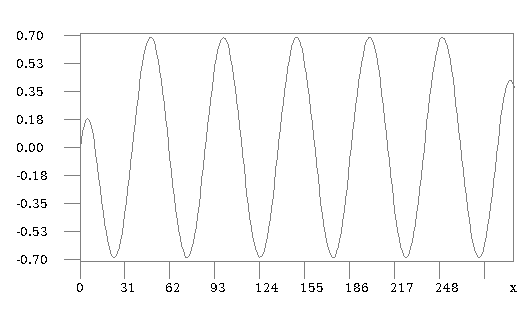}
		\caption{$H_y$ at $\theta=37.5$}
		\label{fig:exa.4.H.3}
	\end{subfigure}
	
	\begin{subfigure}{.33\textwidth}
		\includegraphics[width=.82\linewidth]{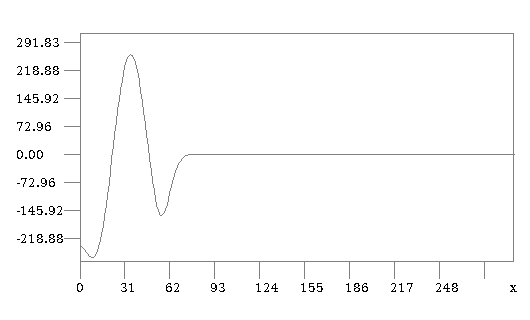}
		\caption{$E_z$ at $\theta=7.5$}
		\label{fig:exa.4.E.1}
	\end{subfigure}%
	\begin{subfigure}{.33\textwidth}
		\includegraphics[width=.82\linewidth]{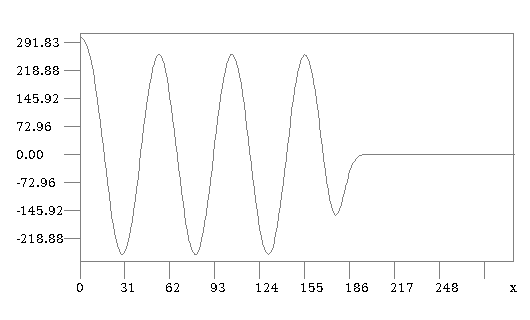}
		\caption{$E_z$ at $\theta=22.5$}
		\label{fig:exa.4.E.2}
	\end{subfigure}%
	\begin{subfigure}{.33\textwidth}
		\includegraphics[width=.82\linewidth]{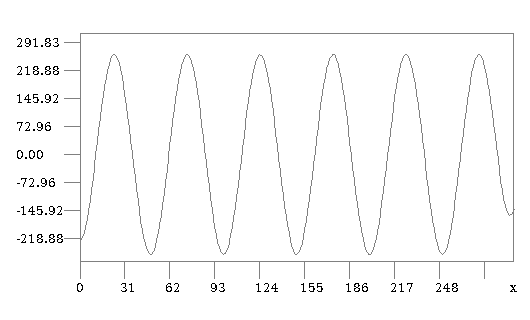}
		\caption{$E_z$ at $\theta=37.5$}
		\label{fig:exa.4.E.3}
	\end{subfigure}
	
	\caption{Solution for cosine source (Case-study 4).}
	\label{fig:exa.4}
\end{figure}

To validate the analytical solution, numerical results obtained using the standard Yee finite-difference time-domain (FDTD) scheme are presented at time $\theta=37.5$.

In all FDTD simulations, the Courant number is set to unity. The spatial and temporal step sizes satisfy the Nyquist–Shannon sampling criterion, and the computational domain is chosen sufficiently large so that boundary effects do not influence the solution at $\theta=37.5$.

A total of ten simulations are performed using progressively refined spatial discretizations. The first simulation employs the same spatial step size as used in the analytical calculations, $\Delta_s=0.125$. The remaining simulations use smaller spatial step sizes given by
\begin{equation*}
	\Delta_s/(2i), i=1,2,...,9
\end{equation*}
The time step $\Delta_{\theta}$ is chosen according to the Courant condition.

The comparison results are shown in Fig.\ref{fig:exa.4.yee}.

Fig.\ref{fig:exa.4.yee.1} presents the numerical solution for $E_z$ obtained using the Yee FDTD scheme together with the corresponding analytical solution, both computed using the same spatial step size. The two solutions are in excellent agreement, with slightly larger discrepancies observed near the beginning and end of the computational domain.

Fig.\ref{fig:exa.4.yee.2} shows that decreasing the spatial step size leads to increased numerical accuracy.

Fig.\ref{fig:exa.4.yee.3} presents the estimation errors corresponding to different spatial step sizes, $\Delta_s=0.125$ and $\Delta_s/(2i), i=1,2,...$ The estimation error is quantified by
\begin{equation*}
	\sum(\Delta_{E_z})^2
\end{equation*}
where $\Delta_{E_z}$ denotes the difference between the numerical and analytical solutions. The errors decrease rapidly as the spatial step size is refined, demonstrating the effect of mesh refinement, and then reach a plateau for sufficiently small step sizes, indicating that further refinement does not lead to a noticeable reduction in error.

\begin{figure}[ht]
	\begin{subfigure}{.33\textwidth}
		\includegraphics[width=.82\linewidth]{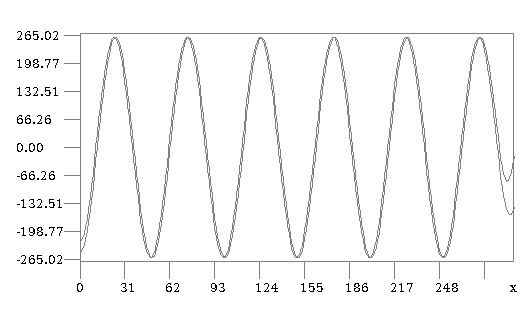}
		\caption{Analytical vs Yee FDTD}
		\label{fig:exa.4.yee.1}
	\end{subfigure}%
	\begin{subfigure}{.33\textwidth}
		\includegraphics[width=.82\linewidth]{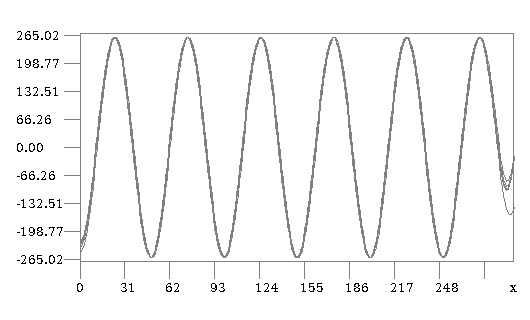}
		\caption{Analytical vs 10 Yee grids}
		\label{fig:exa.4.yee.2}
	\end{subfigure}%
	\begin{subfigure}{.33\textwidth}
		\includegraphics[width=.82\linewidth]{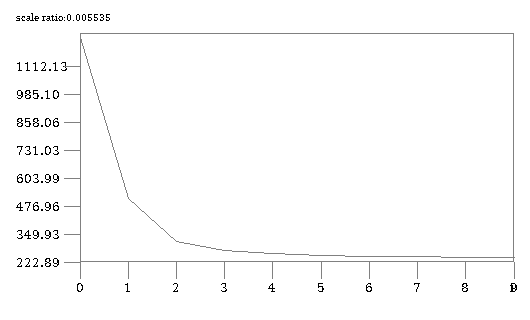}
		\caption{Errors for 10 Yee grids}
		\label{fig:exa.4.yee.3}
	\end{subfigure}
	
	\caption{Comparison of Yee FDTD estimations with the analytical solution.}
	\label{fig:exa.4.yee}
\end{figure}

\section{Discussion}
 Formulas (\ref{1d.solIniH}), (\ref{1d.solIniE}), (\ref{1d.solSrcH}), and (\ref{1d.solSrcE}) extend the classical Taylor framework by enabling mappings from given input functions to potentially different output functions. This generalization broadens the scope from self-mapping of analytic functions to exact time-domain solutions of Maxwell’s equations for arbitrary initial values and sources. The distinction is illustrated by comparing the closed-form formulas (\ref{1d.solIniH.close}) and (\ref{1d.solIniE.close}) with their corresponding series representations given by (\ref{1d.solIniH}) and (\ref{1d.solIniE}).

The outputs of (\ref{1d.solSrcH}) and (\ref{1d.solSrcE}) may or may not belong to the class of known or elementary functions. For example, when a Gaussian function is used as input, Formulas (\ref{1d.solIniH}) and (\ref{1d.solIniE}) yield the elementary functions given in Eqs. (\ref{1d.exa.sol.ini.H}) and (\ref{1d.exa.sol.ini.E}). Conversely, when elementary functions such as those in Eqs. ({\ref{1d.exa.src}}) and ({\ref{1d.exa.src.cos}}) are used as inputs, Formulas (\ref{1d.solSrcH}) and (\ref{1d.solSrcE}) produce new, non-elementary functions—specifically, those in Eqs. (\ref{1d.exa.sol.src.H}), (\ref{1d.exa.sol.src.E}), (\ref{1d.exa.sol.src.cos.H}), and (\ref{1d.exa.sol.src.cos.E})—which, to the author’s knowledge, have not previously appeared in the literature.

In general, one cannot expect the transformation formulas (\ref{1d.solSrcH}) and (\ref{1d.solSrcE}) to yield only known functions. Instead, they define a broader class of solutions whose analytic structure is determined by the properties of the transformation itself.

The proposed analytical formulas offer two key advantages over conventional numerical estimation methods. First, the calculation of the solution at any single point in space and time is fully independent of other points. As a result, there is no need to define a computational domain, apply artificial boundary conditions, or construct absorbing layers—simplifying implementation and avoiding related artifacts.

Second, the analytical solutions preserve the true physical characteristics of the underlying equations. In contrast, finite-difference time-domain (FDTD) methods often introduce numerical artifacts such as dispersion, grid anisotropy, and even non-physical effects like faster-than-light propagation \cite{fdtd:SchneiderDispersion, fdtd:Manry}. These distortions are entirely avoided by using exact function-to-function transformations.

Although the analytical solutions presented in this paper offer a level of precision unattainable by conventional estimation algorithms, they are not intended to replace numerical methods in most engineering applications. Numerical approaches such as FDTD remain essential for handling problems involving finite domains, complex boundary conditions, irregular terrains \cite{maxn:Kohlmann}, or anisotropic media.

One notable advantage of numerical methods is their flexibility: estimation algorithms are typically independent of the specific initial values and source terms. In contrast, analytical solutions require explicit, differentiable functional forms — conditions not always met in practical applications.

However, these two approaches need not be viewed as mutually exclusive. By using numerical estimations to compute the partial derivatives in the analytical expressions (or in analogous three-dimensional versions not covered in this paper), one can construct high-order FDTD algorithms. The author has implemented such hybrid schemes in 3D using both the Yee grid and a non-staggered grid; in both cases, the expected increases in precision were observed.

\section{Future work}
\label{futurework}
Several directions for future work are outlined below. The items are not ordered by importance.
\begin{itemize}
	\item \textbf{Uniqueness of the solution.}
	
	A rigorous treatment of the uniqueness of the analytical solution remains to be established. One possible approach is to assume the existence of two solutions $(E_{z1},H_{y1})$ and $(E_{z2},H_{y2})$ corresponding to the same initial conditions and source term. The differences
	\begin{equation*}
		\Delta_{E_z}=E_{z1}-E_{z2}, \Delta_{H_y}=H_{y1}-H_{y2}
	\end{equation*}
	then satisfy the source-free one-dimensional Maxwell’s equations with zero initial conditions. Under suitable regularity assumptions, for example
	\begin{equation*}
		\Delta_{E_z}(x,0), \Delta_{H_y}(x,0) \in C^\infty
	\end{equation*}
	it is expected that the only solution is the trivial one, implying uniqueness of the solution.
	
	\item \textbf{Extension to three dimensions.}
	
	An important direction for future research is the extension of the present one-dimensional analytical framework to three-dimensional Maxwell’s equations. Conceptually, this involves replacing the spatial derivatives appearing in the one-dimensional formulation with the corresponding curl operators. The extension to three dimensions introduces additional mathematical and computational challenges, including vectorial coupling and geometric complexity. Preliminary results in this direction have been obtained and will be reported elsewhere.
	
	\item \textbf{Notation for the new family of special functions.}
	
	When studying analytical solutions of Maxwell’s equations involving Gaussian functions, in one or higher spatial dimensions, a family of special functions defined by series combining trigonometric, hyperbolic, and exponential structures naturally arises. In the present work, a simple notation is adopted for convenience. A more systematic discussion of notation and classification for this family of functions may be of interest and is left for future work.	
	
	\item \textbf{Non-smooth source terms and initial conditions.}

The present analytical framework assumes sufficiently smooth source terms and initial conditions, which may not always be satisfied in practical physical applications. An important direction for future work is the treatment of non-smooth inputs. Possible approaches include the use of regularization techniques to construct smooth approximations of the data, or the incorporation of numerical differentiation and approximation schemes to estimate the required derivatives in a stable manner.
	
	\item \textbf{Relation to the Method of Moments.}
	
Although the present approach is formulated in an analytical setting and validated using finite-difference time-domain simulations, its structure is related to numerical estimation techniques. It may therefore be of interest to investigate possible conceptual or methodological connections between the present framework and the classical Method of Moments. A systematic study of such relationships, if any, is left for future work.

\end{itemize} 

\appendix

\section{Formulas used by the case-studies}

To apply the extended Taylor series presented in this paper, one must obtain analytical expressions for the derivatives of the initial values and the source term. For the case studies presented in this paper, the required derivatives are produced by formulas (\ref{f6}), (\ref{f7}), (\ref{f4}), and (\ref{f5}). These new formulas are proved in this appendix.

\subsection{The definitions}
For two integers, $n$ and $k$, $n \ge k \ge 0$, let's define the following coefficients.

Odd-binomial coefficients:
\begin{align}
	\label{pnk}
	p_{n,k} := \frac{(2n+1)!}{(n-k)!(2k+1)!}
\end{align}

Even-binomial coefficients:
\begin{align}
	\label{qnk}
	q_{n,k} := \frac{(2n)!}{(n-k)!(2k)!}
\end{align}

Let's use brackets in superscripts to define the following derivative operator:
\begin{equation}
	\label{ds}
	\left(\frac{\dd{}}{\dd{x}}-s\right)^{(n)} := \sum_{k=0}^{n}\binom{n}{k}(-1)^k{s^k}{\frac{\dd{}^{n-k}}{\dd{x^{n-k}}}}
\end{equation}

For a real value, the brackets in superscripts do nothing, the result is a power of the value:
\begin{equation*}
	s^{(n)} = s^n
\end{equation*}

The set of operators $(\mathrm{d}/{\mathrm{d}x}-s)^{(n)}$ is treated only under addition, forming a commutative structure. Throughout this paper, we restrict ourselves to linear combinations and regroupings, without products. Note that the following relation does not necessarily hold: $\left({\dd{}}/{\dd{x}}-s\right)^{(n+1)}= \left({\dd{}}/{\dd{x}}-s\right) \left({\dd{}}/{\dd{x}}-s\right)^{(n)} $.

\subsection{The Formulas}
\begin{lemma}\label{lemma1} For all $x,b \in \mathbb{R}, k \in \mathbb{Z}_{> 0}$ we have
	\begin{equation}
		\label{f1}
		\begin{gathered}
		\ddx\dbxn{k} = 
		\\ \dbxn{k+1} + bx\dbxn{k} - kb\dbxn{k-1}
		\end{gathered}
	\end{equation}
\end{lemma}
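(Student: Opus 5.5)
The identity should be read as an equality of operators acting on an arbitrary smooth function $f$. Accordingly, I will apply both sides to such an $f$ and expand everything using the definition (\ref{ds}) with $s=bx$. On the left, the operator $\ddx$ acts on the products $(bx)^j f^{(k-j)}$, and the product rule is applied there. This is legitimate: the paper's convention forbids composing the bracketed operators with one another, but it does not forbid differentiating an explicit finite sum of functions. Write $D=\ddx$ and $A_n f := \sum_{j=0}^{n}\binom{n}{j}(-1)^j (bx)^j D^{n-j}f$, so that $A_n f=\dbxn{n}f$ by (\ref{ds}).

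The first step is to compute
\begin{equation*}
D A_k f=\sum_{j=0}^{k}\binom{k}{j}(-1)^j (bx)^j D^{k+1-j}f+\sum_{j=1}^{k}\binom{k}{j}(-1)^j\, j\,b\,(bx)^{j-1}D^{k-j}f .
\end{equation*}
The first sum comes from the derivative falling on $f$, and the second from the derivative falling on the power $(bx)^j$.

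The second step handles the first sum using Pascal's rule $\binom{k+1}{j}=\binom{k}{j}+\binom{k}{j-1}$. Substituting this into $A_{k+1}f$ splits it into two pieces. One piece is exactly the first sum above. In the other piece, shifting the index $j\to j+1$ gives $-bx\,A_k f$. Rearranging, the first sum equals $A_{k+1}f+bx\,A_kf$. The boundary terms need care here: the term $j=k+1$ in $A_{k+1}$ is absorbed by $\binom{k}{k}$ after the shift.

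The third step handles the second sum using $j\binom{k}{j}=k\binom{k-1}{j-1}$ and the reindexing $i=j-1$. This turns the second sum into
\begin{equation*}
-kb\sum_{i=0}^{k-1}\binom{k-1}{i}(-1)^i(bx)^iD^{k-1-i}f=-kb\,A_{k-1}f .
\end{equation*}
This step is where the hypothesis $k\ge 1$ is used, since $A_{k-1}$ must be defined. Adding the results of the second and third steps gives (\ref{f1}).

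I expect no step to be a genuine obstacle. The only delicate points are bookkeeping ones: treating $bx$ correctly as a function of $x$ rather than a constant (this is the sole source of the $-kb$ term), and checking the index ranges at the endpoints $j=0$ and $j=k+1$ in the Pascal split.
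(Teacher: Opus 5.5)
Your proof is correct and follows essentially the same route as the paper. You apply the product rule to split the derivative of the defining sum into two series. You identify the one where the derivative falls on the $\mathrm{d}^{k-m}/\mathrm{d}x^{k-m}$ factor as $(\mathrm{d}/\mathrm{d}x - bx)^{(k+1)} + bx\,(\mathrm{d}/\mathrm{d}x - bx)^{(k)}$, with the boundary term absorbed, and turn the other into $-kb\,(\mathrm{d}/\mathrm{d}x - bx)^{(k-1)}$ via $m\binom{k}{m}=k\binom{k-1}{m-1}$ and an index shift. Your explicit use of Pascal's rule simply spells out the regrouping that the paper summarizes as combining three series into two.
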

\begin{proof}
The identity follows by starting from the left-hand side of (\ref{f1}) and applying a proper regrouping of the summations, which yields the right-hand side.	

By applying the differential operator $\mathrm{d}/{\mathrm{d}x}$ and using the definition (\ref{ds}), we can form 3 series of the same index range. These 3 series can be further combined into the following 2 series.
	\begin{equation*}	
	\begin{gathered}
		\mathrm{LHS} = \dbxn{k+1} + (-1)^k(bx)^{k+1}  
		+ \sum_{m=1}^{k}(-1)^m\binom{k}{m}mb^mx^{m-1}\ddxn{k-m}
		\\+ \sum_{m=0}^{k-1}(-1)^{m}\binom{k}{m} (bx)^{m+1}\ddxn{k-m}
	\end{gathered}
	\end{equation*}
	The range of the last series grows to $k$ by absorbing the term $(-1)^k(bx)^{k+1}$. By the definition (\ref{ds}), the last series can be written as $bx\dbxni{k}$, we obtain
	\begin{equation*}	
	\begin{gathered}
		\mathrm{LHS} = \dbxn{k+1}
		+ \sum_{m=1}^{k}(-1)^m\binom{k}{m}mb^mx^{m-1}\ddxn{k-m}
		 + bx\dbxn{k}
	\end{gathered}
	\end{equation*}
	By extracting factor $bk$ from the series, shifting the index range of $m$ downwards, and applying  $({(k-1)!(m+1)})/({(k-1-m)!(m+1)!})=\binom{k-1}{m} $, we obtain  
	\begin{equation*}	
	\begin{gathered}
		\mathrm{LHS} = \dbxn{k+1} 
		- bk\sum_{m=0}^{k-1}(-1)^m\binom{k-1}{m}(bx)^{m}\ddxn{k-1-m}
		\\+ bx\dbxn{k}
	\end{gathered}
\end{equation*}
	
	By the definition (\ref{ds}), the series can be written as $\dbxni{k-1}$, we arrive at
	\begin{equation*}	
	\begin{gathered}
		\mathrm{LHS} = \dbxn{k+1} 
		- bk\dbxn{k-1}
		+ bx\dbxn{k}
		= \mathrm{RHS}
	\end{gathered}
	\end{equation*}
	This completes the proof.
\end{proof}

\begin{lemma}\label{lemma2} For all $\sigma \in \mathbb{R} \cup \{{\dd{}}/{\dd{x}}-bx\}, a,b,x \in \mathbb{R}, n \in \mathbb{Z}_{>0}$ we have
	\begin{equation}
		\label{f2}
		\begin{gathered}
			\sum_{k=0}^{n}(-1)^k{a^{n-k}}q_{n,k}{\sigma}^{(2k+1)}-\sum_{k=1}^{n}(-1)^k4k{a^{n-k+1}}q_{n,k}{\sigma}^{(2k-1)}
			\\= \sum_{k=0}^{n}(-1)^k{a^{n-k}}p_{n,k}{\sigma}^{(2k+1)}
		\end{gathered}
	\end{equation}
\end{lemma}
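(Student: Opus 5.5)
The identity is linear in the symbols $\sigma^{(j)}$. By the convention stated after (\ref{ds}), the operators $(\mathrm{d}/\mathrm{d}x-bx)^{(j)}$ are combined only under addition, and no products or composition rules are used. So I will prove (\ref{f2}) by comparing coefficients. I will show that, for each $k$, the coefficient multiplying $\sigma^{(2k+1)}$ on the left equals the one on the right. Equal coefficients then give the identity for any value of $\sigma$, whether a real number (where $\sigma^{(j)}=\sigma^j$) or the operator $\mathrm{d}/\mathrm{d}x-bx$. No appeal to Lemma \ref{lemma1} is needed.

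First, re-index the second sum on the left by $k\mapsto k+1$. It becomes
\begin{equation*}
-\sum_{k=0}^{n-1}(-1)^{k+1}4(k+1)a^{n-k}q_{n,k+1}\sigma^{(2k+1)} = \sum_{k=0}^{n-1}(-1)^{k}4(k+1)a^{n-k}q_{n,k+1}\sigma^{(2k+1)}.
\end{equation*}
Both sums on the left now carry the same factor $(-1)^k a^{n-k}\sigma^{(2k+1)}$. The second sum stops at $k=n-1$; I extend it to $k=n$ by setting $q_{n,n+1}:=0$, equivalently $1/(-1)!=0$. The left-hand side is then $\sum_{k=0}^{n}(-1)^k a^{n-k}\bigl(q_{n,k}+4(k+1)q_{n,k+1}\bigr)\sigma^{(2k+1)}$. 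It therefore suffices to prove the coefficient identity $q_{n,k}+4(k+1)q_{n,k+1}=p_{n,k}$ for $0\le k\le n$.

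This is a direct factorial computation. Using $(2k+2)!=(2k+2)(2k+1)!$ and $4(k+1)/(2k+2)=2$, we get
\begin{equation*}
4(k+1)q_{n,k+1}=\frac{2(2n)!}{(n-k-1)!(2k+1)!}=\frac{(2n)!}{(n-k)!(2k+1)!}\,2(n-k).
\end{equation*}
Similarly, $q_{n,k}=\frac{(2n)!}{(n-k)!(2k+1)!}(2k+1)$. Adding the two, the bracket is $(2k+1)+2(n-k)=2n+1$, and the sum equals $\frac{(2n+1)!}{(n-k)!(2k+1)!}=p_{n,k}$.

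The boundary case $k=n$ needs separate attention, since there the extended term vanishes. It does, because $2(n-k)=0$, leaving $q_{n,n}=1=p_{n,n}$, so the formula is consistent.

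The only real obstacle is the index bookkeeping: the shift of the second sum and the empty boundary term at $k=n$. Once these are handled, the result is the single binomial-type identity above.
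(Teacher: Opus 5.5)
Your proof is correct and follows essentially the same route as the paper. Both arguments shift the index of the second sum, combine coefficients through the identity $q_{n,k}+4(k+1)q_{n,k+1}=p_{n,k}$, and settle the $k=n$ term using $q_{n,n}=p_{n,n}=1$; the only cosmetic difference is that the paper isolates the $k=n$ term and reabsorbs it at the end, whereas you pad the shifted sum with $q_{n,n+1}:=0$.
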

\begin{proof}
Beginning with the left-hand side of (\ref{f2}), we reorganize the terms step by step until the right-hand side is reached.
	
Applying definition (\ref{qnk}) to the left-hand side of (\ref{f2}), isolating the term $k=n$, reindexing the remaining series, and using the identity	
	\begin{equation*}
	\frac{(2n)!}{(n-k)!(2k)!}+\frac{4(k+1)(2n)!}{(n-k-1)!(2k+2)!}=\frac{(2n+1)!}{(n-k)(2k+1)!}=p_{n,k}
	\end{equation*}
	 we obtain
	\begin{equation*}	
	\begin{gathered}
		\mathrm{LHS} = (-1)^n\si{2n+1} 
		 + \sum_{k=0}^{n-1}(-1)^ka^{n-k}p_{n,k} \si{2k+1}
	\end{gathered}
	\end{equation*}	
	Because $p_{n,n}=1$, the series can absorb the term $(-1)^n\si{2n+1}$, and its index range grows to $k=n$. We arrive at
	\begin{equation*}	
	\begin{gathered}
		\mathrm{LHS} =  \sum_{k=0}^{n}(-1)^ka^{n-k}p_{n,k} \si{2k+1} = \mathrm{RHS}
	\end{gathered}
	\end{equation*}	
	This completes the verification of equation (\ref{f2}).
\end{proof}

\begin{lemma}\label{lemma3} For all $\sigma \in \mathbb{R} \cup \{{\dd{}}/{\dd{x}}-bx\}, a,b,x \in \mathbb{R}, n \in \mathbb{Z}_{\ge 0}$ we have
	\begin{equation}
		\label{f3}
		\begin{gathered}
			\sum_{k=0}^{n}(-1)^k{a^{n-k}}p_{n,k}({\sigma}^{(2k+2)}-2(2k+1)a{\sigma}^{(2k)})
		\\	= \sum_{k=0}^{n+1}(-1)^{k+1}{a^{n+1-k}}q_{n+1,k}{\sigma}^{(2k)}
		\end{gathered}
	\end{equation}
\end{lemma}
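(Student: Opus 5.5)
The plan is to prove (\ref{f3}) the same way as Lemma \ref{lemma2}: a purely combinatorial regrouping of coefficients in which each $\si{j}$ is treated as a formal symbol. Only linear combinations of the $\si{j}$ occur, never products, so the identity holds equally for $\sigma\in\mathbb{R}$ and for $\sigma={\dd{}}/{\dd{x}}-bx$. It therefore suffices to show that, for every $k$ with $0\le k\le n+1$, the coefficient of $\si{2k}$ is the same on both sides.

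First split the left-hand side into two sums. The first is $\sum_{k=0}^{n}(-1)^k a^{n-k}p_{n,k}\si{2k+2}$. Shift its index by $k\to k-1$ so it becomes $\sum_{k=1}^{n+1}(-1)^{k-1}a^{n+1-k}p_{n,k-1}\si{2k}$. The second is $-\sum_{k=0}^{n}(-1)^k 2(2k+1)a^{n+1-k}p_{n,k}\si{2k}$, which already carries the power $a^{n+1-k}$ and needs no shift. Both sums are now indexed by $\si{2k}$ with the common factor $a^{n+1-k}$, matching the right-hand side.

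The three ranges of $k$ are then handled separately. For $k=n+1$ only the shifted first sum contributes: its coefficient is $(-1)^n p_{n,n}=(-1)^n$, and the right-hand side gives $(-1)^{n+2}q_{n+1,n+1}=(-1)^n$. For $k=0$ only the second sum contributes, with coefficient $-2p_{n,0}a^{n+1}$. Here $2p_{n,0}=2(2n+1)!/n!=(2n+2)!/(n+1)!=q_{n+1,0}$, which matches the right-hand side term $(-1)^{1}a^{n+1}q_{n+1,0}$. For $1\le k\le n$, after factoring out $(-1)^{k+1}a^{n+1-k}$, what remains is the identity
\begin{equation*}
p_{n,k-1}+2(2k+1)p_{n,k}=q_{n+1,k}.
\end{equation*}

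Proving this identity is the main step, and it reduces to factorial arithmetic. Write both terms on the left over the common denominator $(n-k+1)!(2k)!$:
\begin{equation*}
p_{n,k-1}=\frac{(2n+1)!\,(2k)}{(n-k+1)!\,(2k)!},\qquad 2(2k+1)p_{n,k}=\frac{2(2n+1)!\,(n-k+1)}{(n-k+1)!\,(2k)!}.
\end{equation*}
Their sum has numerator $(2n+1)!\,(2k+2n-2k+2)=(2n+1)!\,(2n+2)=(2n+2)!$. This gives $(2n+2)!/((n+1-k)!(2k)!)=q_{n+1,k}$, as required. The only care needed is with the boundary terms $k=0$ and $k=n+1$. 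These can also be absorbed into the general identity by adopting the conventions $p_{n,-1}=0$ and $p_{n,n+1}=0$; that is the same "absorbing" step used in the proof of Lemma \ref{lemma2}.
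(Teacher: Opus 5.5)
Your proof is correct and follows essentially the same route as the paper. Both split the left-hand side into two sums, match the boundary terms using $p_{n,n}=1$ and $2p_{n,0}=q_{n+1,0}$, and merge the interior terms via $p_{n,k-1}+2(2k+1)p_{n,k}=q_{n+1,k}$, which is the paper's identity $p_{n,k}+2(2k+3)p_{n,k+1}=\frac{(2n+2)!}{(n-k)!(2k+2)!}$ with the index shifted by one. The differences are minor: you index by $\sigma^{(2k)}$ rather than $\sigma^{(2k+2)}$, and you verify that factorial identity explicitly, whereas the paper only states it.
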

\begin{proof}
Starting from the left-hand side of (\ref{f3}), a suitable rearrangement of terms yields the expression on the right-hand side.
	
	Rewriting the left hand side of (\ref{f3}) in two series:
\begin{equation*}	
	\begin{gathered}
		\mathrm{LHS} = 
	\sum_{k=0}^{n}(-1)^k{a^{n-k}}p_{n,k}{\sigma}^{(2k+2)}
	- \sum_{k=0}^{n}(-1)^k{a^{n+1-k}}p_{n,k}2(2k+1){\sigma}^{(2k)}
	\end{gathered}
\end{equation*}	
	By separating the last term ($k=n$) from the first series, and separating the first term ($k=0$) from the second series, we obtain
\begin{equation*}	
	\begin{gathered}
		\mathrm{LHS} =  (-1)^np_{n,n}{\sigma}^{(2n+2)} - a^{n+1}p_{n,0}2 
		\\ +\sum_{k=0}^{n-1}(-1)^k{a^{n-k}}p_{n,k}{\sigma}^{(2k+2)}
		- \sum_{k=1}^{n}(-1)^k{a^{n+1-k}}p_{n,k}2(2k+1){\sigma}^{(2k)}
	\end{gathered}
\end{equation*}	
	Because $p_{n,0}={(2n+1)!}/{n!}$ and $p_{n,n}=1$, we have
\begin{equation*}	
	\begin{gathered}
		\mathrm{LHS} =  (-1)^n{\sigma}^{(2n+2)} - a^{n+1}2\frac{(2n+1)!}{n!} 
		\\ +\sum_{k=0}^{n-1}(-1)^k{a^{n-k}}p_{n,k}{\sigma}^{(2k+2)}
		- \sum_{k=1}^{n}(-1)^k{a^{n+1-k}}p_{n,k}2(2k+1){\sigma}^{(2k)}
	\end{gathered}
\end{equation*}	
Combining the two series and applying identity
\begin{equation*}
 p_{n,k}+p_{n,k+1}2(2k+3)=\frac{(2n+2)!}{(n-k)!(2k+2)!}
\end{equation*}
we obtain
\begin{equation*}	
	\begin{gathered}
		\mathrm{LHS} =  (-1)^n{\sigma}^{(2n+2)} - a^{n+1}2\frac{(2n+1)!}{n!} 
		 +\sum_{k=0}^{n-1}(-1)^k{a^{n-k}} \frac{(2n+2)!}{(n-k)!(2k+2)!} {\sigma}^{(2k+2)}
	\end{gathered}
\end{equation*}	
By shifting the index range from $k=0,1,...,n-1$ to $k=1,2,...,n$, the two terms, $(-1)^n{\sigma}^{(2n+2)}$ and $a^{n+1}2{(2n+1)!}/{n!}$, can be absorbed by the series with index $k=n+1$ and $k=0$ respectively. The index range of the series becomes $k=0,1,...,n+1$. Because ${(2n+2)!}/({(n+1-k)!(2k)!})=q_{n+1,k}$, we arrive at	
\begin{equation*}	
	\begin{gathered}
		\mathrm{LHS} =  \sum_{k=0}^{n+1}(-1)^{k+1}{a^{n+1-k}} q_{n+1,k} {\sigma}^{(2k)}=\mathrm{RHS}
	\end{gathered}
\end{equation*}	
Thus the left-hand side transforms into the right-hand side, as claimed. 
\end{proof}

\begin{thm} For all $x,a \in \mathbb{R}, n \in \mathbb{Z}_{\ge 0}, f \in C^{2n+1}(\mathbb{R},\mathbb{R})$ we have
	\begin{equation}
		\label{f6}
		\begin{gathered}
			\ddxn{2n} (\exp({-ax^2})f(x))
			\\= \exp({-ax^2})\sum_{k=0}^{n}q_{n,k}(-a)^{n-k}\daxn{2k}f(x)
		\end{gathered}
	\end{equation} 
	\begin{equation}
		\label{f7}
		\begin{gathered}
			\ddxn{2n+1} (\exp({-ax^2})f(x))
			\\= \exp({-ax^2})\sum_{k=0}^{n}p_{n,k}(-a)^{n-k}{\daxn{2k+1}}f(x)
		\end{gathered}
	\end{equation} 
\end{thm}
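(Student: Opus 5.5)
Proof by induction on $n$, proving (\ref{f6}) and (\ref{f7}) together in the chain $(\ref{f6})_n \Rightarrow (\ref{f7})_n \Rightarrow (\ref{f6})_{n+1}$, with $b=2a$ and $\sigma=\daxn{}$ in Lemmas \ref{lemma1}--\ref{lemma3}.

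\textbf{Base case.} For $n=0$, (\ref{f6}) is trivial since $q_{0,0}=1$ and $\daxn{0}f=f$. Likewise (\ref{f7}) reads $(e^{-ax^2}f)'=e^{-ax^2}(f'-2axf)$, which is the definition (\ref{ds}) with $n=1$ and $s=2ax$.

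\textbf{The key operator identity.} Each step applies $\ddx$ to $e^{-ax^2}\sum_k c_k\daxn{j}f$. By the product rule this gives
\begin{equation*}
e^{-ax^2}\sum_k c_k\left(\ddx-2ax\right)\daxn{j}f .
\end{equation*}
The caveat after (\ref{ds}) warns that $(\ddx-2ax)\daxn{j}$ need not equal $\daxn{j+1}$, because $x$ does not commute with $\ddx$. Lemma \ref{lemma1} (with $b=2a$, $k=j$) supplies the corrected relation:
\begin{equation*}
\left(\ddx-2ax\right)\daxn{j}f=\daxn{j+1}f-2aj\,\daxn{j-1}f .
\end{equation*}
This holds for $j\ge1$, and for $j=0$ directly. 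Here $\ddx\daxn{j}$ means $\ddx$ applied to $\daxn{j}f$, as a function of $x$. This is where the smoothness assumption on $f$ enters.

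\textbf{Step $(\ref{f6})_n\Rightarrow(\ref{f7})_n$.} Differentiating $(\ref{f6})_n$ and using the identity above yields
\begin{equation*}
e^{-ax^2}\sum_{k=0}^n q_{n,k}(-a)^{n-k}\left[\daxn{2k+1}-4ak\,\daxn{2k-1}\right]f .
\end{equation*}
Pull out a sign so the coefficients become $(-1)^k$ times powers of $a$; the factor $(-1)^n$ is global. After reindexing, this is exactly the left side of Lemma \ref{lemma2}, and its right side is $\sum_k p_{n,k}(-a)^{n-k}\daxn{2k+1}f$. That is $(\ref{f7})_n$. Lemma \ref{lemma2} is stated for $n\ge1$; the $n=0$ case is the base case.

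\textbf{Step $(\ref{f7})_n\Rightarrow(\ref{f6})_{n+1}$.} Differentiating $(\ref{f7})_n$ gives
\begin{equation*}
e^{-ax^2}\sum_{k=0}^n p_{n,k}(-a)^{n-k}\left[\daxn{2k+2}-2a(2k+1)\daxn{2k}\right]f .
\end{equation*}
This is the left side of Lemma \ref{lemma3}, up to the global sign $(-1)^n$. That lemma converts it to $\sum_{k=0}^{n+1}(-1)^{k+1}a^{n+1-k}q_{n+1,k}\daxn{2k}f$. Multiplying by $(-1)^n$ gives $\sum_k q_{n+1,k}(-a)^{n+1-k}\daxn{2k}f$, which is $(\ref{f6})_{n+1}$.

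\textbf{Main obstacle.} The delicate part is the bookkeeping of signs and powers. One must check that $(-1)^k a^{n-k}$ in the lemmas matches $(-a)^{n-k}$ up to the common factor $(-1)^n$, including the shifted term with $a^{n-k+1}$ in Lemma \ref{lemma2}. One must also justify using Lemmas \ref{lemma2} and \ref{lemma3} with $\sigma$ the operator. Since they only use linear regrouping of the $\sigma^{(j)}$ and never products, this is exactly the commutative additive structure allowed after (\ref{ds}). Applied to $f$, they become identities between functions.
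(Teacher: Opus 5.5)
Your proposal is correct and follows essentially the same route as the paper. Both arguments are an induction that uses Lemma~\ref{lemma1} with $b=2a$ to absorb the product rule, then Lemmas~\ref{lemma2} and \ref{lemma3} to regroup coefficients, with the same sign bookkeeping $(-a)^{n-k}=(-1)^n(-1)^k a^{n-k}$. The only cosmetic differences are these: you chain $(\ref{f6})_n\Rightarrow(\ref{f7})_n\Rightarrow(\ref{f6})_{n+1}$ where the paper goes $(\ref{f7})_m\Rightarrow(\ref{f6})_{m+1}\Rightarrow(\ref{f7})_{m+1}$, and you handle the $j=0$ case of the operator identity directly instead of splitting off the $k=0$ term.
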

\begin{proof}
	We will induct on n, using the 3 lemmas: (\ref{f1}), (\ref{f2}), and (\ref{f3}).
	
	\textbf{Base case ($n=0$)}: It is easy to see that for $n=0$ (\ref{f6}) and (\ref{f7}) hold.
	
	\textbf{Inductive Hypothesis ($n=m$)}: Assume for some $m \in \mathbb{Z}_{\ge 0}$ (\ref{f6}) and (\ref{f7}) hold.
	
	\textbf{Inductive Step}: Use the following notation to make the formulas compact:
	\begin{equation*}
		\Psi^{k} = \daxn{k}
	\end{equation*}
	From (\ref{f7}) and (\ref{f1}) we may deduce the following equation.
	\begin{equation*}
		\begin{gathered}
			\ddxn{2(m+1)}(\exp({-ax^2})f(x)) 
			\\ = \exp({-ax^2})\left(\sum_{k=0}^{m}(-1)^{m-k}p_{m,k}a^{m-k}\left(\Psi^{2k+2}-(2k+1)2a\Psi^{2k} \right) \right)f(x)
		\end{gathered}
	\end{equation*}
	We may extract the factor $(-1)^m $ out of the series. Because $(-1)^{-k}=(-1)^{k} $, the equation becomes
	\begin{equation*}
		\begin{gathered}
			\ddxn{2(m+1)}(\exp({-ax^2})f(x))
			\\= \exp({-ax^2})(-1)^m\left(\sum_{k=0}^{m}(-1)^kp_{m,k}a^{m-k}\left(\Psi^{2k+2}-(2k+1)2a\Psi^{2k} \right) \right)f(x)
		\end{gathered}
	\end{equation*}
	By substituting the right hand side of (\ref{f3}) into the above equation, we obtain
	\begin{equation*}
		\begin{gathered}
			\ddxn{2(m+1)}(\exp({-ax^2})f(x))
			\\ =\exp({-ax^2})(-1)^m\left(\sum_{k=0}^{m+1}(-1)^{k+1}q_{m+1,k}a^{m+1-k}\Psi^{2k} \right)f(x)
		\end{gathered}
	\end{equation*}
	By moving the factor $(-1)^m$ back into the series we arrive at
	\begin{equation*}
		\begin{gathered}
			\ddxn{2(m+1)}(\exp({-ax^2})f(x))
			\\= \exp({-ax^2})\left(\sum_{k=0}^{m+1}q_{m+1,k}(-a)^{m+1-k}\Psi^{2k} \right)f(x)
		\end{gathered}
	\end{equation*}
	The preceding equation shows that (\ref{f6}) holds for $n=m+1$.

Since we have established that (\ref{f6}) holds for $n=m+1$, applying (\ref{f6}) with $n=m+1$ yields
	\begin{equation*}
		\begin{gathered}
			\ddx{}\ddxn{2(m+1)}(\exp({-ax^2})f(x)) = \ddx \exp({-ax^2})\left(\sum_{k=0}^{m+1}q_{m+1,k}(-a)^{m+1-k}\Psi^{2k} \right)f(x)
		\end{gathered}
	\end{equation*}
	Applying the differential operation $\mathrm{d}/{\mathrm{d}x}$ on both side, we obtain
	\begin{equation*}
		\begin{gathered}
			\ddxn{2(m+1)+1}(\exp({-ax^2})f(x)) 
			\\= -2ax \exp({-ax^2})\left(\sum_{k=0}^{m+1}q_{m+1,k}(-a)^{m+1-k}\Psi^{2k} \right)f(x)
			\\ + \exp({-ax^2})\ddx \left(\sum_{k=0}^{m+1}q_{m+1,k}(-a)^{m+1-k}\Psi^{2k} \right)f(x)
		\end{gathered}
	\end{equation*}
	By separating the first term ($k=0$) from both series, we obtain
	\begin{equation*}
		\begin{gathered}
			\ddxn{2(m+1)+1}(\exp({-ax^2})f(x)) = -2ax \exp({-ax^2})q_{m+1,0}(-a)^{m+1} f(x)
			\\ -2ax \exp({-ax^2})\left(\sum_{k=1}^{m+1}(-a)^{m+1-k}\Psi^{2k}q_{m+1,k} \right)f(x)
			\\ + \exp({-ax^2})q_{m+1,0}(-a)^{m+1} \ddx f(x)
			\\ + \exp({-ax^2}) \left(\sum_{k=1}^{m+1}q_{m+1,k}(-a)^{m+1-k}\ddx \Psi^{2k} \right)f(x)
		\end{gathered}
	\end{equation*}
	By combining the two separated terms, we obtain
	\begin{equation*}
		\begin{gathered}
			\ddxn{2(m+1)+1}(\exp({-ax^2})f(x)) = \exp({-ax^2})q_{m+1,0}(-a)^{m+1}\left(\ddx -2ax \right) f(x)
			\\ -2ax \exp({-ax^2})\left(\sum_{k=0}^{m+1}q_{m+1,k}(-a)^{m+1-k}\Psi^{2k} \right)f(x)
			\\ + \exp({-ax^2}) \left(\sum_{k=1}^{m+1}q_{m+1,k}(-a)^{m+1-k}\ddx \Psi^{2k} \right)f(x)
		\end{gathered}
	\end{equation*}
	By substituting the right hand side of (\ref{f1}) into the second series, the first one can be canceled. We obtain
	\begin{equation*}
	\begin{gathered}
		\ddxn{2(m+1)+1}(\exp({-ax^2})f(x)) 
		\\= \exp({-ax^2}) \left( (-1)^{m+1}q_{m+1,0}a^{m+1}\left(\ddx -2ax \right) \right)f(x)
		\\ + \exp({-ax^2}) \left(\sum_{k=1}^{m+1}q_{m+1,k}(-a)^{m+1-k}\Psi^{2k+1} \right)f(x)
		\\ - \exp({-ax^2}) \left(\sum_{k=1}^{m+1}q_{m+1,k}(-a)^{m+1-k}4ka\Psi^{2k-1} \right)f(x)
	\end{gathered}
\end{equation*}
	The first series can absorb the first term, and its starting index becomes 0. We obtain  
	\begin{equation*}
	\begin{gathered}
		\ddxn{2(m+1)+1}(\exp({-ax^2})f(x)) 
		\\= \exp({-ax^2}) \left(\sum_{k=0}^{m+1}q_{m+1,k}(-a)^{m+1-k}\Psi^{2k+1} \right)f(x)
		\\ - \exp({-ax^2}) \left(\sum_{k=1}^{m+1}q_{m+1,k}(-a)^{m+1-k}4ka\Psi^{2k-1} \right)f(x)
	\end{gathered}
\end{equation*}
By moving the factor $(-1)^{m+1}$ out of the two series, we obtain	
	\begin{equation*}
		\begin{gathered}
			\ddxn{2(m+1)+1}(\exp({-ax^2})f(x)) 
			= \exp({-ax^2})(-1)^{m+1} \Bigg(\sum_{k=0}^{m+1}(-1)^ka^{m+1-k}q_{m+1,k} \Psi^{2k+1} 
			 \\- \sum_{k=1}^{m+1}(-1)^ka^{m+2-k} 4k q_{m+1,k} \Psi^{2k-1}  \Bigg)f(x)
		\end{gathered}
	\end{equation*}	
	By substituting the right hand side of (\ref{f2}) with $n=m+1$ into the above, we obtain
	\begin{equation*}
	\begin{gathered}
		\ddxn{2(m+1)+1}(\exp({-ax^2})f(x))
		\\= \exp({-ax^2})(-1)^{m+1}\left(\sum_{k=0}^{m+1}(-1)^kp_{m+1,k}a^{m+1-k}\Psi^{2k+1} \right)f(x)
	\end{gathered}
\end{equation*}	
	By moving the factor $(-1)^{m+1}$ back into the series we arrive at
	\begin{equation*}
		\begin{gathered}
			\ddxn{2(m+1)+1}(\exp({-ax^2})f(x))
			\\ = \exp({-ax^2})\left(\sum_{k=0}^{m+1}p_{m+1,k}(-a)^{m+1-k}\Psi^{2k+1} \right)f(x)
		\end{gathered}
	\end{equation*}
	The preceding equation shows that (\ref{f7}) holds for $n=m+1$.
	
	$\therefore$ By the principle of induction, (\ref{f6}) and (\ref{f7}) hold for all $n\in \mathbb{Z}_{\ge 0}$.
\end{proof}

\subsection{Relations to the Hermite polynomials}

Studying the relations between the above formulas and known results may shed light on their internal consistency and equivalence. This section focuses in particular on their connection with the Hermite polynomials.

Let $f(x) \equiv 1$, (\ref{f6}) and (\ref{f7}) reduce to (\ref{f4}) and (\ref{f5}):
\begin{equation}
	\label{f4}
	\begin{gathered}
		\ddxn{2n}\exp({-ax^2})
		= \exp({-ax^2})(-1)^{n}a^n \sum_{k=0}^{n}(-1)^kq_{n,k}{(2\sqrt{a}x)}^{2k}  \\
		\forall x,a \in \mathbb{R}, n \in \mathbb{Z}_{\ge 0}
	\end{gathered}
\end{equation}
\begin{equation}
	\label{f5}
	\begin{gathered}
		\ddxn{2n+1}\exp({-ax^2})
		= \exp({-ax^2})(-1)^{n+1}{\sqrt{a}}^{2n+1} \sum_{k=0}^{n}(-1)^{k}p_{n,k}{(2\sqrt{a}x)}^{2k+1}  \\
		\forall x,a \in \mathbb{R}, n \in \mathbb{Z}_{\ge 0}
	\end{gathered}
\end{equation} 
Let $a=1$. In this case, formulas (\ref{f4}) and (\ref{f5}) are equivalent to the standard definition of the Hermite polynomials \cite{NIST:DLMF}. Specifically, Eq. (18.5.13) in \cite{NIST:DLMF} can be written as two separate expressions corresponding to the even and odd indices:
\begin{equation*}
	H_{2n}(x)=(2n)!\sum_{l=0}^{n}\frac{(-1)^l(2x)^{2n-2l}}{l!(2n-2l)!}
\end{equation*}
\begin{equation*}
	H_{2n+1}(x)=(2n+1)!\sum_{l=0}^{n}\frac{(-1)^l(2x)^{2n+1-2l}}{l!(2n+1-2l)!}
\end{equation*}
Letting
\begin{equation*}
	k=n-l
\end{equation*}
and applying the definitions (\ref{pnk}) and (\ref{qnk}), the Hermite polynomials can be written in terms of the even binomial coefficients $q_{n,k}$ and the odd binomial coefficients $p_{n,k}$:
\begin{equation*}
	H_{2n}(x)=\sum_{k=0}^{n}(-1)^{n-k}(2x)^{2k}q_{n,k}
\end{equation*}
\begin{equation*}
	H_{2n+1}(x)=\sum_{k=0}^{n}(-1)^{n-k}(2x)^{2k+1}p_{n,k}
\end{equation*}
Using these relations, the results (\ref{f4}) and (\ref{f5}) with $a=1$ can be expressed explicitly in terms of the Hermite polynomials:
\begin{equation*}
	\ddxn{2n}\exp({-x^2})
	= (-1)^n\exp({-x^2})H_{2n}(x)
\end{equation*}
\begin{equation*}
	\ddxn{2n+1}\exp({-x^2})
	= (-1)^{n+1}\exp({-x^2})H_{2n+1}(x)
\end{equation*}

\section*{Acknowledgment}
I thank Dragan Redžić and Wim Vegt for their valuable feedback and constructive comments and suggestions on a draft. I also thank the anonymous reviewers for their professional insights and helpful suggestions, which substantially improved the quality of this manuscript.

Declaration of generative AI and AI-assisted technologies in the manuscript preparation process:

During the preparation of this manuscript the author used ChatGPT to assist polishing the English language. After using this tool, the author reviewed and edited the content as necessary and takes full responsibility for the content of the published article.

\bibliographystyle{elsarticle-num}
\bibliography{solution2}

\end{document}